\newtheorem{Theorem}{Theorem}
\newtheorem{Lemma}{Lemma}
\newtheorem{Convention}{Convention}
\newtheorem{Corollary}{Corollary}
\newtheorem{Remark}{Remark}
\date{}
\begin{document}
\author{M.I.Belishev\thanks {Saint-Petersburg Department of the Steklov Mathematical Institute, RAS,
                 belishev@pdmi.ras.ru. Supported by the RFBR
                 grants 17-01-00529-a and 18-01-00269.} and
        A.F.Vakulenko\thanks{Saint-Petersburg Department of the Steklov Mathematical Institute,
                 vak@pdmi.ras.ru. Supported by the RFBR
                 grant 18-01-00269.}}
\title{On algebraic and uniqueness properties of 3d harmonic quaternion fields}
\maketitle

\begin{abstract}
Let $\Omega$ be a smooth compact oriented 3-dimensional Riemannian
manifold with boun\-dary. A quaternion field is a pair
$q=\{\alpha,u\}$ of a function $\alpha$ and a vector field $u$ on
$\Omega$. A field $q$ is {\it harmonic} if $\alpha, u$ are
continuous in $\Omega$ and $\nabla\alpha={\rm rot\,}u,\,{\rm
div\,}u=0$ holds into $\Omega$. The space ${\mathscr Q}(\Omega)$
of harmonic fields is a subspace of the Banach algebra $\mathscr
C\left(\Omega\right)$ of continuous quaternion fields with the
point-wise multiplication $qq'=\{\alpha\alpha'-u\cdot u',\,\alpha
u'+\alpha'u+u\wedge u'\}$. We prove a Stone-Weierstrass type
theorem: the subalgebra $\vee{\mathscr Q}(\Omega)$ generated by
harmonic fields is dense in $\mathscr C\left(\Omega\right)$. Some
results on 2-jets of harmonic functions and the uniqueness sets of
harmonic fields are provided.
\end{abstract}

\noindent{\bf Key words:}\,\,\,3d quaternion harmonic fields, real
uniform Banach algebras, Stone-Weierstrass type theorem on
density, uniqueness theorems.

\noindent{\bf MSC:}\,\,\,30F15,\,35Qxx,\,46Jxx.
\bigskip

\setcounter{section}{-1}

\section{Introduction}\label{sec Introduction}
\subsubsection*{Motivation}
There is an approach to inverse problems of mathematical physics
(the so-called BC-method), which was originally based on the
relations between inverse problems and the boundary control theory
\cite{B EMP,B EACM,B UMN 2017}. The BC-method recovers Riemannian
manifolds via spectral and/or dynamical boundary data.  Later on,
its version that makes use of connections with Banach algebras,
was proposed in \cite{B Calderon 2003,B Sobolev Geom Rings,B UCLA
2013}.

The problem of recovering the manifold via its DN-map (the
so-called Impedance Tomography Problem) in dimensions $\geqslant
3$ isn't properly solved yet. However, beginning from the papers
\cite{B CUBO 3d tomogr 2005,BSharaf 2008} it becomes clear that
harmonic quaternion fields may play the key role in the 3d ITP. It
is the reason, which has stimulated the study of their properties
\cite{B Quat 2016,BV_8}.

Here we consider certain of algebraic and uniqueness properties of
the harmonic quaternion fields with hope for their future
application to ITP \cite{B Quat 2016}. In the mean time, our
results may be of certain independent interest for the real
uniform Banach algebras theory \cite{Abel Jarosz,Jarosz,Kulkarni}.

\subsubsection*{Main result}
\noindent$\bullet$\,\,\,Let $\Omega$ be a smooth compact oriented
3-dimensional Riemannian manifold with boun\-dary, $T\Omega_x$ the
tangent space at $x\in\Omega$, $u\cdot v$ and $u\wedge v$ the
inner and vector products in $T\Omega_x$. Elements of the space
$H_x:=\mathbb R\oplus T\Omega_x$ (the pairs $q=\{\alpha,u\}$)
endowed with a multiplication $qq'=\{\alpha\alpha'-u\cdot
u',\,\alpha u'+\alpha'u+u\wedge u'\}$ are said to be the {\it
geometric quaternions}. As an algebra, $H_x$ is isometrically
isomorphic to the quaternion algebra $\mathbb H$.
\smallskip

\noindent$\bullet$\,\,\,A {\it quaternion field} is a pair
$q=\{\alpha,u\}$ of a function $\alpha$ and vector field $u$ on
$\Omega$; in other words, $q$ is an $H_x$-valued function on the
manifold. The space $C(\Omega;H)$ of continuous quaternion fields
endowed with the point-wise linear operations and multiplication,
and the relevant $\rm sup$-norm, is a real uniform Banach algebra
\cite{Abel Jarosz,Jarosz,Kulkarni}.

A field $q=\{\alpha,u\}\in C(\Omega;H)$ is {\it harmonic} if
$\alpha, u$ are continuous in $\Omega$ and $\nabla\alpha={\rm
rot\,}u,\,{\rm div\,}u=0$ holds into $\Omega$. The space
${\mathscr Q}(\Omega)$ of harmonic fields is a subspace of
$C(\Omega,H)$ (but not a subalgebra!).
\smallskip

\noindent{$\bullet$}\,\,\,Let $\mathscr A$ be an algebra. For a
set $A\subset\mathscr A$ by $\vee A$ we denote the minimal
subalgebra that contains $A$. The main result of the paper is a
Stone-Weierstrass type Theorem \ref{T1} which claims that
$\vee\mathscr Q(\Omega)$ is dense in $C(\Omega;H)$.

\subsubsection*{More results and comments}
\noindent{$\bullet$}\,\,\,In the course of proving Theorem
\ref{T1} we show that $\mathscr Q(\Omega)$ (and, hence,
$\vee\mathscr Q(\Omega)$) separates points of $\Omega$. It is
almost evident for $\Omega\subset\mathbb R^3$ \cite{BV_8} but far
from being evident for a 3d-manifold of arbitrary topology. The
separation property is derived from the so-called
$H$-controllability of $\Omega$ from the boundary, which is much
stronger than separability. The $H$-controllability is proved by
the use of the results \cite{MT} on existence of the global Green
function and the Landis type uniqueness theorems for the second
order elliptic equations \cite{Leis}. The key step in proving
Theorem \ref{T1} is to show that $\overline{\vee{\mathscr
Q}(\Omega)}$ contains the algebra of scalar fields
$\left\{\{\alpha,0\}\,|\,\,\alpha\in C^\mathbb R(\Omega)\right\}$.
The latter resembles the trick applied in \cite{Holladay}.
\smallskip

\noindent{$\bullet$}\,\,\,In sec \ref{sec 2 jets controllablity}
we prove that the 2-jets of harmonic functions are point-wise
controllable from the boundary. The proof also makes use of the
elliptic uniqueness theorems. Then this result is applied to show
that harmonic functions determine the Riemannian structure of 3d
manifold. As we hope, it is a step towards the main prospective
goal: application to 3d ITP on Riemannian manifolds.
\smallskip

\noindent{$\bullet$}\,\,\,One more result which is of certain
independent interest is the following uniqueness property of
harmonic quaternion fields (sec \ref{sec Uniqueness properties of
harmonic fields}). If $q\in\mathscr Q(\Omega)$ vanishes on a piece
of a smooth surface then it vanishes in $\Omega$ identically.
\smallskip

\noindent{$\bullet$}\,\,\,Everywhere in the paper we deal with
{\it real} functions, fields, spaces, etc. Everywhere {\it smooth}
means $C^\infty$-smooth.

\subsubsection*{Acknowledgements}
We'd like to thank Dr C.Shonkwiler for helpful remarks and useful
references.

\section{Quaternion fields}\label{sec Quaternion
fields}

\subsubsection*{Quaternions}
\noindent$\bullet$\,\,\,Let $E$ be an oriented 3d euclidean space,
$u\cdot v$ and $u\wedge v$ the scalar (inner) and vector products,
$|u|=\sqrt{u\cdot u}$. Elements $p=\{\alpha,u\}$ of the space
$H:={\mathbb R}\oplus E$ endowed with the norm
$|p|=\sqrt{\alpha^2+|u|^2}$ and a (noncommutative) multiplication
 \begin{equation}\label{Eq multiplication}
pp':=\{\alpha \alpha' - u\cdot u', \,\alpha u' + \alpha' u
+u\wedge u'\}\,,
 \end{equation}
are said to be {\it geometric quaternions}.

The norm obeys $|p^2|=|p|^2$,
\smallskip

\noindent$\bullet$\,\,\,Let $\mathbb H$ be the algebra of
(standard) quaternions. Recall that it is the real algebra
generated by ${\bf 1, i, j, k}$ with the unit $\bf 1$ and
multiplication defined by the table
 \begin{equation*}\label{Eq multipl table}
{\bf i^2=j^2=k^2=-1,\quad ij=k,\,\,jk=i,\,\,ki=j}\,.
 \end{equation*}
\smallskip

\noindent$\bullet$\,\,\, For an orthogonal normalized basis
$\varepsilon=\{e_1, e_2, e_3\}$ in $E$, the correspondence
$e_1\mapsto {\bf i},\, e_2\mapsto {\bf j},\, e_3\mapsto {\bf k}$
determines an isometric isomorphism $\mu_\varepsilon: H\to\mathbb
H$,
 \begin{equation}\label{Eq H to H quat}
\{\alpha, \,a e_1+b e_2+c
e_3\}\overset{\mu_\varepsilon\,\,}\mapsto \alpha{\bf 1}+a{\bf
i}+b{\bf j}+c {\bf k}\,,
 \end{equation}
(we write $H\cong\mathbb H$). Any isometric isomorphism $\mu:
H\to\mathbb H$ is of the form (\ref{Eq H to H quat}) by proper
choice of the basis $\varepsilon$.

\subsubsection*{Vector analysis}
In the sequel, the following assumptions are accepted.
 \begin{Convention}\label{C1}
$\Omega$ is a smooth compact oriented Riemannian 3d-manifold with
the smooth boundary $\partial\Omega$. It is endowed with the
metric tensor $g\in C^{2}$; $d\mu$ and $\star$ are the Riemannian
volume 3-form  and the Hodge operator.
 \end{Convention}
On such a manifold, the intrinsic operations of vector analysis
are well defined on smooth functions and vector fields (sections
of the tangent bundle $T\Omega$). Following \cite{Sch}, Chapter
10, we recall their definitions.
\smallskip

\noindent{$\bullet$}\,\,\,For a vector field $u$, one defines the
{\it conjugate $1$-form} $u_\flat$ by
$u_\flat(v)=g(u,v),\,\,\forall v$. For a $1$-form $f$, the {\it
conjugate field} $f^\flat$ is defined by
$g(f^\flat,u)=f(u),\,\,\forall u$.
\smallskip

\noindent{$\bullet$}\,\,\,A {\it scalar product}:\, $\{
\text{fields}\} \times \{\text{fields}\} \, \overset{\cdot\,\,}\to
\, \{ \text{functions}\}$ is defined point-wise by $u\cdot
v=g(u,v)$. A {\it vector product}: $\{ \text{fields}\} \times
\{\text{fields}\} \, \overset{\wedge\,\,}\to \, \{\text{fields}\}$
is defined point-wise by $g(u \wedge v,w)=d\mu\,(u,v,w),\,\forall
w$.
\smallskip

\noindent{$\bullet$}\,\,\,A {\it gradient}: $\{\text{functions}\}
\overset{\nabla}\to \{\text{fields}\}$ and a {\it divergence}: $\{
\text{fields}\} \overset{\rm div}\to \{\text{functions}\}$ are
defined by $\nabla \alpha=(d\alpha)^\flat$ and ${\rm div}\,u=\star
\,d\!\star\, u_\flat$ respectively, where $d$ is the exterior
derivative.
\smallskip

\noindent{$\bullet$}\,\,\,A {\it rotor}:
$\{\text{fields}\}\overset{\rm rot}\to\{\text{fields}\}$ is
defined by ${{\rm rot\,}}u =(\star\, d\,u_\flat )^\flat$. Recall
the basic identities: ${\rm div}\,{{\rm rot\,}}=0$ and ${\rm
rot\,} \nabla =0$. The equalities
 \begin{equation*}
\nabla\alpha\,=\,{\rm rot\,} u \qquad \text{and}\qquad
d\alpha=\star\,d\,u_\flat
  \end{equation*}
are equivalent.
 \smallskip

\noindent{$\bullet$}\,\,\,The {\it Laplacian} $\Delta :\,
\{\text{functions}\} \to \{\text{functions}\}$ is $\Delta ={\rm
div\,}\nabla$. The {\it vector Laplacian} $\vec\Delta :\,
\{\text{fields}\} \to \{\text{fields}\}$ is $\vec \Delta
=\nabla\,{\rm div\,}-{\rm rot\,}{\rm rot\,}$.
 \begin{Remark}\label{R1}
Under the above accepted assumptions on the smoothness of $\Omega$
and $g$, the (harmonic) functions and fields, which obey
$\Delta\alpha=0$ and $\vec\Delta u=0$ in the relevant weak sense,
do belong to the class $C^2_{\rm loc}$: see, e.g,
\cite{BersJhScht}, Part II, Chapter 1.
 \end{Remark}

\subsubsection*{Fields}

Let $\dot\Omega:=\Omega\setminus\partial\Omega$ be the set of the
inner points, $C(\Omega)$ and $\vec C(\Omega)$ the spaces of
continuous functions and vector fields. Let $H_x:=\mathbb R\oplus
T\Omega_x,\,\,\,x\in\Omega$ be the point-wise geometric quaternion
algebras.
\smallskip

\noindent$\bullet$\,\,\,A quaternion {\it field} is a pair
$p=\{\alpha,u\}$ with the components $\alpha\in C(\Omega)$ and
$u\in\vec C(\Omega)$, the values $p(x)=\{\alpha(x),u(x)\}\in H_x$
being regarded as geometric quaternions.

By $C(\Omega;H)$ we denote the space of continuous quaternion
fields. One can regard them as sections of the bundle
$C(\Omega;H)=\cup_{x\in\Omega}H_x$.
\smallskip

\noindent$\bullet$\,\,\, Elements of the subspace
 $$
{{\mathscr Q}(\Omega)}\,:=\,\left\{p \in
C(\Omega;H)\,\big|\,\,\nabla\alpha={\rm rot\,} u,\,\,{\rm div\,}
u=0\,\,\,\text{in}\,\,\,{\dot\Omega}\right\}
 $$
are said to be {\it harmonic fields}. To be rigorous, here the
conditions on the components of $p$ are understood in the relevant
sense of distributions but imply $\Delta\alpha=0$ and $\vec\Delta
u=0$, so that $\alpha$ and $u$ are automatically smooth enough by
Remark \ref{R1}.

\section{Density theorem}\label{sec Density Theorem}
\subsubsection*{Algebra $C(\Omega;H)$}
The space $C(\Omega; H)$ with the point-wise multiplication
(\ref{Eq multiplication}) and the norm
 $$
\|p\|\,=\,\underset{x\in\Omega}{\rm
sup\,}|p(x)|\,=\,\underset{x\in\Omega}{\rm
sup\,}\sqrt{|\alpha(x)|^2+ |u(x)|^2_{T\Omega_x}}
 $$
satisfying $\|qp\|\leqslant\|q\|\|p\|$, $\|p^2\|=\|p\|^2$ is a
real uniform noncommutative Banach algebra.
\smallskip

\noindent{$\bullet$}\,\,\,The fields $\{\alpha,0\}$ constitute a
subalgebra $C(\Omega;\mathbb R)$ of $C(\Omega;H)$, which is
isometrically isomorphic to the real continuous function algebra
on $\Omega$:
 \begin{equation}\label{Eq  C scal cong C^R(Omega)}
C(\Omega;\mathbb R)\,\cong\,C^\mathbb R(\Omega)\,.
 \end{equation}
We say $\{\alpha,0\}$ to be the scalar fields and often identify
them with functions $\alpha$ via the map
$\alpha\mapsto\{\alpha,0\}$, which embeds $C^\mathbb R(\Omega)$ in
$C(\Omega;H)$.
\smallskip

\noindent{$\bullet$}\,\,\,The harmonic subspace $\mathscr
Q(\Omega)\subset C(\Omega;H)$ is not an algebra since, in general,
$p,q\in\mathscr Q(\Omega)$ does not imply $pq\in\mathscr
Q(\Omega)$. It is easy to see that
 $$
\mathscr Q(\Omega)\cap C(\Omega;\mathbb
R)=\left\{\{c,0\}\,|\,\,c\,\,\text{is\,\,a\,\,constant\,\,function}\right\},
 $$
whereas $\{1,0\}$ is the unit of $C(\Omega;H)$.

\subsubsection*{Main result}
For an algebra $\mathscr A$ and a set $S\subset \mathscr A$ by
$\vee S$ we denote a minimal (sub)algebra in $\mathscr A$, which
contains $S$. Our main results is the following.
 \begin{Theorem}\label{T1}
The algebra $\vee \mathscr Q(\Omega)$ is dense in $C(\Omega;H)$.
 \end{Theorem}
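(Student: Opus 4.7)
The plan is to derive Theorem~\ref{T1} from three interlocking ingredients: (i) \emph{$H$-controllability} of $\Omega$ from the boundary, i.e.\ surjectivity of the evaluation $q \mapsto q(x_0)$ from $\mathscr Q(\Omega)$ onto $H_{x_0}$ for every $x_0 \in \dot\Omega$; (ii) \emph{point separation} of $\mathscr Q(\Omega)$, a direct consequence of (i); and (iii) the containment $C(\Omega;\mathbb R) \subset \overline{\vee \mathscr Q(\Omega)}$ of the scalar subalgebra. Granted these, a partition-of-unity argument in the spirit of a non-commutative Stone-Weierstrass theorem completes the proof.

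For (i), the idea is to construct, for each prescribed $q_0 \in H_{x_0}$, a harmonic field $q$ with $q(x_0) = q_0$ by solving suitable boundary-value problems for the scalar and vector Laplacians and representing the solutions via the global Green function of \cite{MT}. The Landis-type unique continuation results of \cite{Leis} prevent the image of the evaluation from collapsing into a proper subspace of $H_{x_0}$: any such collapse would impose a non-trivial linear constraint on all harmonic fields at $x_0$ which, propagated by UCP, is incompatible with the freedom to prescribe boundary data. Point separation (ii) is then immediate: for $x \neq y$, use $H$-controllability at $x$ to produce a harmonic field vanishing at $x$ but not at $y$.

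Step (iii) is the heart of the argument and the main obstacle. My approach is a Holladay-type manipulation of products in $\vee \mathscr Q(\Omega)$. For $q_i = \{\alpha_i, u_i\} \in \mathscr Q(\Omega)$ one has
\[
q_1 q_2 + q_2 q_1 = \{2(\alpha_1\alpha_2 - u_1 \cdot u_2),\; 2(\alpha_1 u_2 + \alpha_2 u_1)\}, \qquad q^2 = \{\alpha^2 - |u|^2,\; 2\alpha u\},
\]
so symmetric products carry scalar parts built from inner products of vector components. Using (i) to supply, at each point, a family of harmonic fields whose vector parts span $T\Omega_x$, one can assemble finite algebraic combinations of such products whose vector parts cancel at chosen points while the scalar parts remain non-trivial. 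The pool of scalar elements of $\vee \mathscr Q(\Omega)$ so obtained separates points of $\Omega$ by (ii); classical real Stone-Weierstrass on $C^\mathbb R(\Omega) \cong C(\Omega;\mathbb R)$ then yields (iii).

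Finally, given $f = \{\beta, v\} \in C(\Omega;H)$ and $\varepsilon > 0$, use (i) to choose at each $x_0 \in \Omega$ a harmonic $\tilde q_{x_0}$ with $\tilde q_{x_0}(x_0) = f(x_0)$; by continuity $\tilde q_{x_0}$ approximates $f$ to within $\varepsilon$ on some neighborhood $U_{x_0}$. Compactness yields a finite cover $\{U_i\}$ with corresponding $\tilde q_i$, and a continuous partition of unity $\{\phi_i\} \subset C(\Omega;\mathbb R) \subset \overline{\vee \mathscr Q(\Omega)}$ (available by (iii)) gives $\sum_i \phi_i \tilde q_i \in \overline{\vee \mathscr Q(\Omega)}$ within $\varepsilon$ of $f$ in norm. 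The principal difficulty lies entirely in (iii): the constraints $\nabla\alpha = {\rm rot}\,u$, ${\rm div}\,u = 0$ are not preserved by multiplication, so rich scalar content must be manufactured by combining many harmonic fields, and $H$-controllability is precisely the tool that supplies the necessary degrees of freedom.
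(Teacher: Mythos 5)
Your architecture matches the paper's ($H$-controllability $\to$ separation $\to$ scalar subalgebra $\to$ partition of unity), and your final patching step is a workable variant of the paper's. But two of your three ingredients have genuine gaps as stated. First, your formulation of $H$-controllability at a \emph{single} point cannot yield point separation: surjectivity of $q\mapsto q(x)$ says nothing about the value of $q$ at a second point $y$, so ``use $H$-controllability at $x$ to produce a field vanishing at $x$ but not at $y$'' is a non sequitur --- a priori every harmonic field vanishing at $x$ might also vanish at $y$. What is needed (and what the paper's Lemma~\ref{L H controllability} actually proves, by a duality argument with the Green function followed by unique continuation for the Cauchy problem) is simultaneous controllability of the data $\{w^f(a_i),\nabla w^f(a_i)\}$ at an arbitrary \emph{finite} set of points. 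Relatedly, prescribing the full quaternion value $\{\alpha(x_0),u(x_0)\}$ of a harmonic field requires $\nabla\alpha\in\mathscr G\cap\mathscr R$, i.e.\ orthogonality to the Dirichlet space $\mathscr D$ of the Hodge--Morrey--Friedrichs decomposition; on a topologically nontrivial $\Omega$ this is a real constraint that your sketch of (i) ignores and that the paper's Lemma~\ref{L separation} is devoted to handling.

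Second, and more seriously, step (iii) as described does not close. Elements of $C(\Omega;\mathbb R)$ are fields whose vector part vanishes \emph{identically}; combinations of symmetric products ``whose vector parts cancel at chosen points'' are not scalar fields and cannot feed into Stone--Weierstrass on $C^{\mathbb R}(\Omega)$. You write down $q^2=\{\alpha^2-|u|^2,\,2\alpha u\}$ but stop short of the one observation that makes the argument work: $\mathscr Q(\Omega)$ contains fields with zero scalar part, namely $\{0,\nabla w^f\}$ (since ${\rm rot\,}\nabla w^f=0$ and ${\rm div\,}\nabla w^f=\Delta w^f=0$), and for these $\{0,\nabla w^f\}^2=-\{|\nabla w^f|^2,0\}$ is a genuine scalar element of $\vee\mathscr Q(\Omega)$. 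Multi-point controllability of the gradients then shows that the functions $|\nabla w^f|^2$ separate points and have no common zero, so the classical Stone--Weierstrass theorem gives $C(\Omega;\mathbb R)\subset\overline{\vee\mathscr Q(\Omega)}$. Without this specialization to vanishing scalar part, your ``manufactured scalar content'' never actually lands in $C(\Omega;\mathbb R)$, and the claim that it ``separates points by (ii)'' is unsupported, since (ii) concerns separation by harmonic fields, not by the scalar parts of their products.
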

The proof occupies the rest of sec \ref{sec Density Theorem}.

\subsubsection*{Green function}
\noindent{$\bullet$}\,\,\,A well-known fact of Geometry is that
the assumptions of Convention \ref{C1}, in particular, provide the
existence of a compact 3-dimensional $C^\infty$- manifold
$\Omega'\Supset\Omega$ {\it without boundary} endowed with the
tensor $g'\in C^2$ such that $g'|_{\Omega}=g$. This enables one to
apply the results by M.Mitrea and M.Taylor \cite{MT} (existence of
the fundamental solution, Green function, Poisson formula, etc)
which are valid for much weaker smoothness restrictions on $g$ and
$\partial\Omega$. Also, one can apply the results on the
uniqueness of continuation of solutions to the elliptic PDE
\cite{BersJhScht,Leis}.
\smallskip

\noindent{$\bullet$}\,\,\,The following results are mostly taken
from \cite{MT}. Also we use some well-known facts of the elliptic
2-nd order equations theory \cite{Miranda,BersJhScht,Leis}. By
$W^l_p(\Omega)$ we denote the Sobolev space of functions which
possess the (generalized) derivatives of the order $l=1,2,\dots$
belonging to $L_p(\Omega)$ ($p\geqslant 1$). Recall that
$\dot\Omega=\Omega\setminus\partial\Omega$. Also we put
$D:=\left\{(x,y)\in\Omega\times\Omega\,|\,\,x=y\right\}$. The
distance in $\Omega$ is denoted by $r_{xy}$.
\smallskip

For an $h\in L_2(\Omega)$, the Dirichlet problem
 \begin{align*}
& \Delta v = h && \text{in}\,\,\,\dot\Omega\\
& v=0 && \text{on}\,\,\,\partial\Omega
 \end{align*}
has a unique solution $v^h\in W^2_2(\Omega)$ vanishing at the
boundary. The solution is represented in the form
 \begin{equation}\label{Eq v^h via G}
v^h(x)\,=\,\int_{\Omega}G(x,y)\,h(y)\,d\mu(y), \qquad x\in \Omega
 \end{equation}
via the {\it Green function} $G$, which possesses the following
properties.
\smallskip

\noindent{\bf 1.}\,\,\,$G\in C^2_{\rm
loc}\left([\Omega\times\Omega]\setminus D\right)$;\quad
$G(x,y)=G(y,x),\,\,\,(x,y)\not\in D$;
 \begin{equation}\label{Eq G=0 on dOmega}
G(x,\cdot)|_{\partial\Omega}\,=\,0,\qquad x\in\dot\Omega\,.
 \end{equation}
For the closed sets $K,K'\subset\Omega$ provided $K\cap
K'=\emptyset$ the map $y\mapsto G(\cdot,y)$ is continuous from $K$
to $C^2(K')$.
\smallskip

\noindent{\bf 2.}\,\,\,The estimates $$G(x,y)\leqslant
\frac{c}{r_{xy}}\,,\qquad |\nabla_y G(x,y)|\leqslant
\frac{c}{r^2_{xy}}$$ hold and imply $G(x,\cdot)\in W^1_p(\Omega)$
for $x\in\Omega,\,\,1\leqslant p<\frac{3}{2}$\,.
\smallskip

\noindent{\bf 3.}\,\,\,As a distribution of the class ${\mathscr
D}^{\,\prime}(\dot\Omega)$ on the test functions (of the variable
$y$) of the class ${\mathscr D}(\dot\Omega)$, the Green function
satisfies
 \begin{equation}\label{Eq Delta G=delta}
\Delta_y G(x,\cdot)\,=\,\delta_x,
 \end{equation}
where $\delta_x$ is the Dirac measure supported at $x$. Note that
in (\ref{Eq Delta G=delta}), and below in (\ref{Eq Delta
deG=delta'}), (\ref{Eq deG=0 on dOmega}), the variable
$x\in\dot\Omega$ plays the role of parameter.
\smallskip

\noindent{\bf 4.}\,\,\,For $f\in C^\infty(\partial\Omega)$, the
inhomogeneous boundary value problem
 \begin{align}
& \Delta w = 0 && \text{in}\,\,\,\dot\Omega \label{Eq D3}\\
& w=f && \text{on}\,\,\,\partial\Omega \label{Eq D4}
 \end{align}
has a unique classical solution $w=w^f(x)$, which is represented
in the form
 \begin{equation}\label{Eq uf via G}
w^f(x)\,=\,\int_{\partial
\Omega}\partial_{\nu_y}G(x,y)\,f(y)\,d\sigma(y),\qquad
x\in\dot\Omega\,,
 \end{equation}
where $\nu_y$ is the outward unit normal at the boundary,
$d\sigma$ is the boundary surface element. This is a Poisson
formula derived from (\ref{Eq v^h via G}) by integration by parts.
Function $f$ in (\ref{Eq D4}) is said to be a {\it boundary
control}.
\smallskip

\noindent{$\bullet$}\,\,\,Fix a point $x\in\dot\Omega$ and a
vector $e\in T\Omega_x,\,\,|e|=1$. Let $\gamma_e$ be the geodesic
that emanates from $x$ in direction $e$. Define a functional
$\partial^x_e\delta_x\in\mathscr D^{\,\prime}(\dot\Omega)$ by
 $$
\langle\partial^x_e\delta_x,\varphi\rangle :=\lim\limits_{\gamma_e
\ni\,\, x'\to
x}\frac{\varphi(x')-\varphi(x)}{r_{xx'}}=\left\langle\lim\limits_{\gamma_e
\ni\,\, x'\to
x}\frac{\delta_{x'}-\delta_x}{r_{xx'}}\,,\,\varphi\right\rangle=e\cdot\nabla\varphi(x)\,.
 $$
The relevant limit passage in (\ref{Eq Delta G=delta}) determines
a derivative $\partial^x_e G(x,\cdot)\in\mathscr
D^{\,\prime}(\dot\Omega)$ which satisfies
 \begin{equation}\label{Eq Delta deG=delta'}
\Delta_y[\partial^x_e G(x,\cdot)]\,=\,\partial^x_e\delta_x\,.
 \end{equation}
In the mean time, by the properties 1 and 2, $\partial^x_e
G(\cdot,y)$ is a (classical) function belonging to $L_p(\Omega)$
for $1\leqslant p<\frac{3}{2}$. Moreover it is {\it harmonic} (and
hence $C^2$-smooth) in $\Omega\setminus\{x\}$ and satisfies
  \begin{equation}\label{Eq deG=0 on dOmega}
\partial^x_e G(x,\cdot)|_{\partial\Omega}=0\,, \qquad x\in\dot\Omega.
 \end{equation}

\noindent{$\bullet$}\,\,\,The relevant limit passage in the
Poisson formula (\ref{Eq uf via G}) implies
 \begin{equation}\label{Eq e nabla uf via G}
e\cdot\nabla w^f(x)\,=\,\int_{\partial
\Omega}\partial_{\nu_y}\left[\partial^x_e
G(x,y)\right]\,f(y)\,d\sigma(y),\qquad x\in\dot\Omega\,.
 \end{equation}

\subsubsection*{$H$-controllability}
\noindent$\bullet$\,\,\,The following result plays the key role in
the proof of Theorem \ref{T1}. Recall that $H_x=\mathbb R\oplus
T\Omega_x\cong\mathbb H$, and $\Omega$ obeys Convention \ref{C1}.

For a set of points $A=\{a_1,\dots, a_N\}\subset \Omega$ define a
$4N$-dimensional space $H_A:=\oplus\sum_{i=1}^N H_{a_i}$ and a map
$M_A: C^\infty(\partial\Omega)\to H_A$:
 $$
f\,\mapsto \,\oplus\sum_{i=1}^N \{w^f(a_i),\nabla w^f(a_i)\}
 $$
(each summand $\{w^f(a_i),\nabla w^f(a_i)\}$ belongs to the
corresponding $H_{a_i}$). We say $\Omega$ to be {\it
$H$-controllable from boundary} if this map is surjective for any
finite set $A$.
 \begin{Lemma}\label{L H controllability}
The manifold $\Omega$ is $H$-controllable from boundary.
 \end{Lemma}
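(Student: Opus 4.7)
\medskip

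\noindent\textbf{Proof plan.} The natural strategy is duality. Suppose $M_A$ is not surjective. Since $H_A$ is a finite-dimensional inner-product space, there exists a nonzero element $\bigl(\{\beta_i,v_i\}\bigr)_{i=1}^N\in H_A$ orthogonal to the image of $M_A$, i.e.
$$
\sum_{i=1}^N\bigl[\beta_i\,w^f(a_i)+v_i\!\cdot\!\nabla w^f(a_i)\bigr]=0\qquad\forall f\in C^\infty(\partial\Omega).
$$
Substituting the Poisson representations (\ref{Eq uf via G}) and (\ref{Eq e nabla uf via G}) (with $\partial^{a_i}_{v_i}:=|v_i|\,\partial^{a_i}_{v_i/|v_i|}$ when $v_i\neq 0$) and using that $C^\infty(\partial\Omega)$ separates continuous functions on $\partial\Omega$, I would conclude
$$
\partial_{\nu_y}U(y)=0\quad\text{on }\partial\Omega,\qquad\text{where}\quad U(y):=\sum_{i=1}^N\bigl[\beta_i\,G(a_i,y)+\partial^{a_i}_{v_i}G(a_i,y)\bigr].
$$

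Next, by (\ref{Eq G=0 on dOmega}) and (\ref{Eq deG=0 on dOmega}) the same function $U$ also satisfies $U|_{\partial\Omega}=0$ (here I restrict attention to $a_i\in\dot\Omega$; boundary points are treated separately by the same formulas since there $w^f(a_i)=f(a_i)$ can be prescribed freely). Moreover, $U$ is classical harmonic in $\dot\Omega\setminus A$ by property~1 and the harmonicity statement after (\ref{Eq Delta deG=delta'}). The manifold $\dot\Omega\setminus A$ is connected (a 3-manifold minus finitely many points stays connected) and touches $\partial\Omega$, so the Landis-type Cauchy uniqueness theorem for the Laplace equation cited via \cite{Leis} forces
$$
U\equiv 0\qquad\text{in}\ \dot\Omega\setminus A.
$$

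The final step is a local singularity analysis at each $a_i$. In geodesic normal coordinates centred at $a_i$ the Green function has the asymptotics $G(a_i,y)=\frac{1}{4\pi\,r_{a_i y}}+O(1)$, and hence $\partial^{a_i}_{v_i}G(a_i,y)$ has a stronger dipole-type singularity of order $r_{a_i y}^{-2}$ with angular factor proportional to $v_i\!\cdot\!(y-a_i)/r_{a_i y}^3$; the contributions of the other $a_j$, $j\neq i$, are $C^2$-smooth at $a_i$ by property~1. Matching the leading $r^{-2}$ singularity of $U\equiv 0$ near $a_i$ across all directions of approach yields $v_i=0$; matching the next ($r^{-1}$) order then forces $\beta_i=0$. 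As this holds for every $i$, the dual element vanishes, contradicting its choice. Hence $M_A$ is surjective.

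The main obstacle is step three: ensuring the harmonic extension $U$ really does have vanishing Cauchy data on $\partial\Omega$ with only the regularity supplied by properties~1--2 (so that boundary traces of $G(a_i,\cdot)$ and of $\partial^{a_i}_{v_i}G(a_i,\cdot)$ are well defined and smooth), and invoking a version of the elliptic Cauchy uniqueness theorem applicable to the punctured manifold $\dot\Omega\setminus A$. Both ingredients are supplied by \cite{MT} and \cite{Leis}, so the argument goes through.
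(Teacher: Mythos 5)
Your proposal follows the paper's strategy almost exactly: the duality reduction, the auxiliary potential $U$ (the paper's $\Phi$), the vanishing Cauchy data on $\partial\Omega$, and the elliptic uniqueness theorem giving $U\equiv 0$ off $A$ are all the same. The only genuine divergence is the final contradiction. You extract $v_i=0$ and $\beta_i=0$ by matching the $r^{-2}$ (dipole) and $r^{-1}$ singularities of $U$ at each $a_i$; the paper instead observes that $U$, being an $L_p$ function vanishing a.e., is the zero distribution, so $\Delta U=0$ in $\mathscr D^{\,\prime}(\dot\Omega)$, which contradicts the explicit identity $\Delta U=\sum_i\alpha_i\delta_{a_i}+\beta_i\partial^x_{e_i}\delta_{a_i}\neq 0$ from (\ref{Eq Delta G=delta}) and (\ref{Eq Delta deG=delta'}). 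Both routes work, but note that your singularity matching needs the precise leading asymptotics $G(a_i,y)=c\,r_{a_iy}^{-1}+O(1)$ together with a differentiable remainder (so that $\partial^{a_i}_{v_i}G$ really has the stated dipole form); the properties listed in the paper supply only one-sided bounds $G\leqslant c/r_{xy}$, $|\nabla_yG|\leqslant c/r_{xy}^2$, so you would have to import the sharper parametrix expansion from \cite{MT}. The paper's distributional argument avoids this entirely, using only $G(a_i,\cdot)\in W^1_p$ and $\partial_{e_i}G(a_i,\cdot)\in L_p$.

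One concrete weak point: your treatment of $a_i\in\partial\Omega$ does not work as sketched. For a boundary point the Green function degenerates ($G(a_i,\cdot)$ vanishes identically by symmetry and (\ref{Eq G=0 on dOmega})), so the corresponding terms drop out of $U$ and the singularity analysis at $a_i$ says nothing; also, prescribing $w^f(a_i)=f(a_i)$ freely does not control $\nabla w^f(a_i)$, whose normal component is the DN map of $f$. The paper resolves this by embedding $\Omega\Subset\Omega'$ with $g'|_\Omega=g$ and running the interior argument in $\Omega'$; you should adopt that device rather than arguing directly on $\partial\Omega$.
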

 \begin{proof}
The opposite means that $H_A\ominus{\rm Ran\,}M_A\not=\{0\}$, i.e.
there is a nonzero element $\oplus\sum_{i=1}^N \{\alpha_i,\beta_i
e_i\}\in H_A\,\,\,(\alpha_i,\beta_i\in\mathbb R,\,\,\, |e_i|=1$)\,
such that
 \begin{equation}\label{Eq1 of Lemma 1}
\sum\limits_{i=1}^N \alpha_i w^f(a_i)+\beta_i\, e_i\cdot\nabla
w^f(a_i)=0
 \end{equation}
holds for all $f\in C^\infty(\partial\Omega)$. Show that such an
assumption leads to contradiction.
\smallskip

\noindent$\bf 1.$\,\,\,Let $A\subset\dot\Omega$, i.e., all $a_i$
are the interior points. A function
 \begin{equation}\label{Eq def Phi}
\Phi(y):=\sum\limits_{i=1}^N \alpha_i
G(a_i,y)+\beta_i\partial^x_{e_i}G(a_i,y)
 \end{equation}
satisfies
 \begin{align}
\label{Eq Phi1} & \Delta \Phi =0 && {\rm in}\,\,\,\Omega\setminus
A\\
\label{Eq Phi2} & \Phi|_{\partial\Omega}\,=\,0
 \end{align}
by (\ref{Eq G=0 on dOmega}), (\ref{Eq Delta G=delta}), (\ref{Eq
Delta deG=delta'}), and (\ref{Eq deG=0 on dOmega}).

The relations (\ref{Eq uf via G}), (\ref{Eq e nabla uf via G}) and
(\ref{Eq1 of Lemma 1}) easily follow to
 $$
\int_{\partial\Omega}\partial_\nu\Phi(y)\,f(y)\,d\sigma(y)\,=\,0
 $$
that implies
 \begin{align}
\label{Eq Phi3}
\partial_\nu\Phi|_{\partial\Omega}\,=\,0
 \end{align}
by arbitrariness of $f$.
\smallskip

\noindent$\bf 2.$\,\,\, So, $\Phi$ is harmonic in $\Omega\setminus
A$ and has the {\it zero} Cauchy data at the boundary: see
(\ref{Eq Phi2}) and (\ref{Eq Phi3}). By the well-known uniqueness
property of solutions to elliptic PDE (see, e.g., \cite{Leis},
sec. 4.3, Remark 4.17), we get $\Phi=0$ in $\Omega\setminus A$,
i.e., almost everywhere in $\Omega$.

Since $G(a_i,\cdot)\in W^1_p(\Omega)$ and
$\partial_{e_i}G(a_i,\cdot)\in L_p(\Omega)$, we have $\Phi\in
L_p(\Omega)$ for some $p\geqslant 1$. Therefore, $\Phi$ is a
summable function equal zero a.e. in $\Omega$. Thus, $\Phi=0$  as
a distribution of the class $\mathscr D^{\,\prime}(\dot\Omega)$.

In the mean time, by (\ref{Eq Delta G=delta}) and (\ref{Eq Delta
deG=delta'}) one has
 $$
\Delta\Phi\,=\,\sum\limits_{i=1}^N
\alpha_i\delta_{a_i}+\beta_i\partial^x_{e_i}\delta_{a_i}\,\not=\,0\,,
 $$
i.e., $\Phi$ is a {\it nonzero} element of $\mathscr
D^{\,\prime}(\dot\Omega)$. We arrive at the contradiction that
proves the Lemma for $A\in\dot\Omega$.
\smallskip

\noindent$\bf 3.$\,\,\,Let $A$ contain the points of
$\partial\Omega$.  The smoothness assumptions on $\Omega$ enable
one to provide $\Omega', g'$ obeying Convention \ref{C1} and such
that $\Omega\Subset\Omega'$ and $g'|_{\Omega}=g$ holds. Then one
has $A\subset\dot\Omega'$ that reduces this case to the previous
one.
 \end{proof}
Note that relations between controllability and uniqueness
theorems (like the one used in the proof) are widely exploited in
control theory for PDE (see, e.g., \cite{B UMN 2017}).
\medskip

\noindent$\bullet$\,\,\,Recall that $w^f$ is a harmonic function
that solves (\ref{Eq D3}), (\ref{Eq D4}). As immediate consequence
of Lemma \ref{L H controllability} we have
 \begin{Corollary}\label{Cor 1 |nabla w|^2}
The algebra $\vee\left\{|\nabla w^f|^2\,|\,\,f\in
C^\infty(\Omega)\right\}$ is dense in $C^\mathbb R(\Omega)$.
 \end{Corollary}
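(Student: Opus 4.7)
The plan is to apply the classical real Stone-Weierstrass theorem to the subalgebra $\mathscr A := \vee\bigl\{|\nabla w^f|^2 \,\big|\, f\in C^\infty(\partial\Omega)\bigr\}$ of $C^\mathbb R(\Omega)$. Since $\Omega$ is compact and, for $f\in C^\infty(\partial\Omega)$, the solution $w^f$ to (\ref{Eq D3})--(\ref{Eq D4}) is smooth in $\dot\Omega$ and sufficiently regular up to the boundary, the function $|\nabla w^f|^2$ lies in $C^\mathbb R(\Omega)$, so $\mathscr A$ is indeed a subalgebra of $C^\mathbb R(\Omega)$. In the version of Stone-Weierstrass that does not require the subalgebra to contain a unit, density is equivalent to the conjunction of (i) $\mathscr A$ has no common zero in $\Omega$, and (ii) $\mathscr A$ separates the points of $\Omega$. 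Both conditions will be furnished directly by Lemma \ref{L H controllability}.

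For (i), fix $x\in\Omega$ and apply Lemma \ref{L H controllability} to the singleton $A=\{x\}$: the map $M_A : f\mapsto\{w^f(x),\nabla w^f(x)\}$ is surjective onto $H_x$, so one can choose $f\in C^\infty(\partial\Omega)$ with $\nabla w^f(x)=e$ for any prescribed $e\in T\Omega_x$. Taking $|e|=1$ yields $|\nabla w^f(x)|^2 = 1\neq 0$, so $\mathscr A$ does not vanish at $x$.

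For (ii), fix distinct $x,y\in\Omega$ and apply Lemma \ref{L H controllability} to $A=\{x,y\}$: the map $M_A$ is surjective onto $H_x\oplus H_y$, so the values of $\nabla w^f$ at $x$ and $y$ may be prescribed independently. Choosing $f$ with $\nabla w^f(x)=e$ (any unit vector in $T\Omega_x$) and $\nabla w^f(y)=0$ produces $|\nabla w^f(x)|^2=1\neq 0 =|\nabla w^f(y)|^2$, so the generator $|\nabla w^f|^2\in\mathscr A$ separates $x$ from $y$.

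The real Stone-Weierstrass theorem now forces $\overline{\mathscr A}=C^\mathbb R(\Omega)$. The only potential obstacle in this route was producing, for each finite configuration of points, harmonic functions with arbitrarily prescribed gradient values there; but that is precisely what $H$-controllability of Lemma \ref{L H controllability} provides, so granted that lemma the corollary is essentially a one-step consequence of Stone-Weierstrass.
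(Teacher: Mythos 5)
Your proposal is correct and follows essentially the same route as the paper: both deduce from Lemma \ref{L H controllability} that the functions $|\nabla w^f|^2$ separate points and have no common zero, and then invoke the classical (unit-free) Stone--Weierstrass theorem. Your write-up merely spells out the choice of prescribed gradient values ($\nabla w^f(x)=e$, $\nabla w^f(y)=0$) a bit more explicitly than the paper does.
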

Indeed, by Lemma \ref{L H controllability}, for any $a,b\in\Omega$
there is a smooth $f$ such that $|\nabla w^f(a)|^2\not=|\nabla
w^f(b)|^2$, i.e., the functions $|\nabla w^f(\cdot)|^2$ separate
points of $\Omega$. In the mean time, by the same Lemma, there is
no $x_0\in\Omega$, at which all these functions vanish
simultaneously. Hence, by the classical Stone-Weierstrass Theorem
(see, e.g., \cite{Naimark}), the above mentioned density does
hold.
\smallskip

Note that $\{0,\nabla w^f\}\in\mathscr Q(\Omega)$ and $\{0,\nabla
w^f\}^2=-\{|\nabla w^f(\cdot)|^2,0\}\in\vee\mathscr Q(\Omega)$.
Hence, the algebra $\vee\left\{\{|\nabla w^f|^2,0\}\,|\,\,f\in
C^\infty(\Omega)\right\}$ is a subalgebra in $\vee\mathscr
Q(\Omega)$. By (\ref{Eq  C scal cong C^R(Omega)}), Corollary
\ref{Cor 1 |nabla w|^2} implies that this algebra is dense in
$C(\Omega;\mathbb R)$. As a result, denoting
 $$
\mathscr C\,:=\,\overline{\vee\mathscr Q(\Omega)}
 $$
we arrive at the important relation
 \begin{equation}\label{Eq mscr C supset C(Omega;R)}
\mathscr C\,\supset\,C(\Omega;\mathbb R)\,.
 \end{equation}

\subsubsection*{Strong separation}
We say that a family $\mathscr F \subset C(\Omega;H)$ {\it
strongly separates} points (of $\Omega$) if for any $a,b\in\Omega$
and $h_a\in H_a,\,h_b\in H_b$ there is a $p\in\mathscr F$ such
that $p(a)=h_a$ and $p(b)=h_b$ holds \cite{Jarosz}.
 \begin{Lemma}\label{L separation}
The space $\mathscr Q(\Omega)$ strongly separates points.
 \end{Lemma}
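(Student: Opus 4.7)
I would argue by duality. Since $H_a\oplus H_b$ is $8$-dimensional, the evaluation map $p\mapsto(p(a),p(b))$ from $\mathscr Q(\Omega)$ is surjective iff no nonzero linear functional on $H_a\oplus H_b$ annihilates its image. Any such functional has the form
\[
\ell\bigl(\{\alpha_a,u_a\},\{\alpha_b,u_b\}\bigr)\,=\,c_a\alpha_a+c_b\alpha_b+v_a\cdot u_a+v_b\cdot u_b,
\]
with $c_a,c_b\in\mathbb R$ and $v_a\in T\Omega_a$, $v_b\in T\Omega_b$; the goal is to show that vanishing of $\ell$ on all $(p(a),p(b))$ with $p\in\mathscr Q(\Omega)$ forces $c_a=c_b=0$ and $v_a=v_b=0$.

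The first reduction uses the family $\{0,\nabla w^f\}\in\mathscr Q(\Omega)$, which is harmonic because ${\rm rot\,}\nabla w^f=0$ and ${\rm div\,}\nabla w^f=\Delta w^f=0$: plugging into $\ell$ gives $v_a\cdot\nabla w^f(a)+v_b\cdot\nabla w^f(b)=0$ for all $f\in C^\infty(\partial\Omega)$. By Lemma \ref{L H controllability} applied with $A=\{a,b\}$, the pair $(\nabla w^f(a),\nabla w^f(b))$ exhausts $T\Omega_a\oplus T\Omega_b$, so choosing it equal to $(v_a,v_b)$ forces $|v_a|^2+|v_b|^2=0$, whence $v_a=v_b=0$. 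Testing $\ell$ next on the constant fields $\{c,0\}\in\mathscr Q(\Omega)$ yields $c(c_a+c_b)=0$ for every $c$, so $c_b=-c_a$.

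It remains to exclude $c_a\ne 0$, which would require $\alpha(a)=\alpha(b)$ for every $\{\alpha,u\}\in\mathscr Q(\Omega)$. The plan is to produce a single harmonic quaternion field violating this: pick $\alpha=w^f$ with $\alpha(a)=1$, $\alpha(b)=0$ via Lemma \ref{L H controllability}, and exhibit a vector potential $u$ on $\Omega$ satisfying ${\rm rot\,}u=\nabla\alpha$ and ${\rm div\,}u=0$. Once one such $u$ is in hand, Lemma \ref{L H controllability} allows further adjustment of $u(a)$ and $u(b)$ to arbitrary prescribed values by adding a gradient $\nabla\phi$ of a harmonic $\phi$ (both harmonic-field conditions are preserved under this gauge change), so the full strong-separation statement reduces to existence of the vector potential.

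\textbf{Main obstacle.} The hardest step is this global construction of $u$ when $\Omega$ has nontrivial second cohomology: the $2$-form $\star d\alpha$ is closed (since $d\!\star\!d\alpha=\Delta\alpha\,d\mu=0$) but its class $[\star d\alpha]\in H^2(\Omega)$ may obstruct the exactness needed to solve $du_\flat=\star d\alpha$. My plan is to resolve this by working inside the boundaryless closed extension $\Omega'\Supset\Omega$ provided by Convention \ref{C1}, where standard Hodge/Helmholtz decomposition applies, and, if a residual obstruction remains, to restrict $f$ to the codimension-finite subspace of boundary controls on which $[\star dw^f]=0$; Lemma \ref{L H controllability} will guarantee that this subspace is still rich enough to realize $w^f(a)\ne w^f(b)$, completing the contradiction.
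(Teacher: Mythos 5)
Your duality framing is sound, and the first two reductions are correct: the fields $\{0,\nabla w^f\}$ together with Lemma \ref{L H controllability} kill $v_a$ and $v_b$, and the constants give $c_b=-c_a$. You have also correctly located the real difficulty — on a topologically nontrivial $\Omega$ the gradient $\nabla w^f$ need not lie in the range of ${\rm rot}$. But the step you lean on to finish, namely that Lemma \ref{L H controllability} ``will guarantee that this subspace is still rich enough to realize $w^f(a)\ne w^f(b)$,'' is precisely the point that does \emph{not} follow from Lemma \ref{L H controllability}, and it is where the paper invests its main effort. In the Hodge--Morrey--Friedrichs language the paper uses, $\nabla w^f={\rm rot\,}u$ is solvable iff $\nabla w^f\perp\mathscr D$ (the $\beta_2$-dimensional Dirichlet space), i.e. iff $f\perp\nu\cdot\mathscr D$ in $L_2(\partial\Omega)$ — this is your codimension-finite subspace of controls. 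On it the functional $f\mapsto w^f(a)-w^f(b)=(P_a-P_b,f)$ is nonzero iff $P_a-P_b\notin\nu\cdot\mathscr D$, where $P_x=\partial_{\nu}G(x,\cdot)$ is the Poisson kernel. Lemma \ref{L H controllability} only yields $P_a-P_b\ne 0$; it says nothing about whether this particular nonzero element lands in the finite-dimensional space $\nu\cdot\mathscr D$. The paper closes exactly this gap with a separate uniqueness argument (its Step 2): if $P_a-P_b=d\cdot\nu$ with $d\in\mathscr D$, then $\nabla[G(a,\cdot)-G(b,\cdot)]$ and $d$ are both normal on $\partial\Omega$ (since $G(a,\cdot),G(b,\cdot)$ vanish there and $d\wedge\nu=0$), hence have the same boundary data and must coincide on the common domain of harmonicity; but the former has distributional divergence $\delta_a-\delta_b\ne 0$ while ${\rm div\,}d=0$ — a contradiction. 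Your first fallback (passing to the closed extension $\Omega'$) does not repair this, since $w^f$ has no reason to extend harmonically beyond $\Omega$ and the obstruction space $\mathscr D$ is intrinsic to $(\Omega,\partial\Omega)$.

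The remaining ingredients of your plan are fine and match the paper's: once one vector potential $u$ exists, the gauge freedom $u\mapsto u+\nabla\phi$ with $\phi$ harmonic preserves both conditions ${\rm rot\,}u=\nabla\alpha$ and ${\rm div\,}u=0$, and Lemma \ref{L H controllability} lets you prescribe $u(a),u(b)$ arbitrarily (the boundary case being handled by embedding $\Omega\Subset\Omega'$). So the architecture of your argument is viable and in fact a little cleaner than the paper's direct construction; the one genuinely missing idea is the proof that $P_a-P_b\notin\nu\cdot\mathscr D$.
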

\begin{proof}

\noindent$\bullet$\,\,\,Let $\vec L_2(\Omega)$ be the space of
square-integrable vector fields and $\mathscr H:=\{v\in \vec
L_2(\Omega)\,|\,\,{\rm div\,} v=0,\,\,{\rm rot\,} v=0\}$ its
harmonic subspace. The well-known Hodge-Morrey-Friedrichs
decomposition claims that
 \begin{equation}\label{Eq Hodge Morrey Fried}
\mathscr H\,=\,\mathscr G\oplus\mathscr N\,=\,\mathscr
R\oplus\mathscr D\,,
 \end{equation}
where
 \begin{align*}
& \mathscr G:=\{v\in\mathscr H\,|\,\,v=\nabla\alpha\},\quad
\mathscr N:=\{v\in\mathscr H\,|\,\,v\cdot\nu=0\}\,,\\
& \mathscr R:=\{v\in\mathscr H\,|\,\,v={\rm rot\,} u\},\quad
\mathscr D:=\{v\in\mathscr H\,|\,\,v\wedge\nu=0\}\,.
 \end{align*}
(see, e.g., \cite{Sch}, Corollary 3.5.2). The subspaces $\mathscr
N$ and $\mathscr D$ determined by the boundary conditions are
called the Neumann and Dirichlet spaces respectively. Their {\it
finite} dimensions are equal to the Betti numbers: ${\rm
dim\,}\mathscr N=\beta_1,\,\,{\rm dim\,}\mathscr D=\beta_2$
\cite{Sch}. Note that $\mathscr N\cap\mathscr D=\{0\}$ \cite{B
CUBO 3d tomogr 2005, Sch}. Also note that ${\rm dim\,}\mathscr
G={\rm dim\,}\mathscr R=\infty$.
\smallskip

\noindent$\bullet$\,\,\,As a consequence of (\ref{Eq Hodge Morrey
Fried}), a field $v\in\mathscr H$ is represented in the form
$v=\nabla\alpha={\rm rot\,} u$ if and only if $v\in\mathscr
G\cap\mathscr R$ or, equivalently, $v\bot[\mathscr N\dot+\mathscr
D]$.

If $w=w^f(x)$ solves (\ref{Eq D3}), (\ref{Eq D4}) then for any
$d\in\mathscr D$ one has
 $$
(\nabla w^f,d)=\int_\Omega \nabla w^f\cdot
d\,\,d\mu=\int_{\partial\Omega}f\,d\!\cdot\!\nu\,d\sigma\,.
 $$
In the mean time, since $\nabla w^f\in\mathscr G$, the
representation $\nabla w^f={\rm rot\,}u$ holds if and only if
$\nabla w^f\bot\mathscr D$, which is equivalent to
 \begin{equation}\label{Eq w^f bot D}
\int_{\partial\Omega}f\,d\!\cdot\!\nu\,d\sigma\,=\,0\,,\qquad
d\in\mathscr D.
 \end{equation}
In particular, taking $f=1$ one has $w^f=1$ in $\Omega$ and gets
 \begin{equation}\label{Eq 1 bot D}
\int_{\partial\Omega}\,d\!\cdot\!\nu\,d\sigma\,=\,0\,,\qquad
d\in\mathscr D.
 \end{equation}
\noindent$\bullet$\,\,\,Now, fix two distinct points
$a,b\in\Omega$ and elements $h_a=\{c_a, k_a\}\in H_a,\,h_b=\{c_b,
k_b\}\in H_b$. To prove the Lemma we need to show that there is a
smooth $f$, which provides
 \begin{equation}\label{Eq Values}
w^f(a)=c_a,\,w^f(b)=c_b;\quad\nabla w^f={\rm rot\,}u;\quad
u(a)=h_a,\,u(b)=h_b\,.
 \end{equation}
\noindent{\bf Step 1.}\,\,\,At first assume $a,b\in\dot\Omega$.
Let $P_x(y):=\partial_{\nu_y}G(x,y)$ be the Poisson kernel. By
(\ref{Eq uf via G}) for $f=1$ we have
 \begin{equation}\label{Eq Poisson normalized}
\int_{\partial \Omega}P_x(y)\,d\sigma(y)\,=\,1\,, \qquad
x\in\Omega\,.
 \end{equation}
In accordance with (\ref{Eq uf via G}) and (\ref{Eq w^f bot D}),
to satisfy the relations $w^f(a)=c_a,\,w^f(b)=c_b;\,\nabla
w^f={\rm rot\,}u$ in (\ref{Eq Values}) we need to find $f$
provided
 \begin{align*}
& \int_{\partial
\Omega}P_a(y)\,f(y)\,d\sigma(y)=c_a\,,\quad\int_{\partial
\Omega}P_b(y)\,f(y)\,d\sigma(y)=c_b\,;\\
&
\int_{\partial\Omega}f(y)\,d(y)\!\cdot\!\nu\,d\sigma(y)\,=\,0\,,\qquad
d\in\mathscr D,
 \end{align*}
or, equivalently,
 \begin{align}\label{Eq Values equiv}
(P_a,f)=c_a,\, (P_b,f)=c_b\,,\,\,\,f\bot \,\nu\!\cdot\!\mathscr D
 \end{align}
(the inner products in $L_2(\partial\Omega)$), where
$\nu\cdot\mathscr D:=\{\nu\cdot d\,|\,\,d\in\mathscr D\}$.

Comparing (\ref{Eq 1 bot D}) with (\ref{Eq Poisson normalized}),
we conclude that neither $P_a$ nor $P_b$ belong to
$\nu\cdot\mathscr D$. In the mean time, $P_a\not= P_b$ as elements
of $L_2(\partial\Omega)$. Indeed, otherwise we'd have
$w^f(a)=w^f(b)$ for any $f$ that is impossible by Lemma \ref{L
separation}. Hence, ${\rm span}\{P_a,P_b\}\cap\nu\cdot\mathscr D$
may consist of $\{c(P_a-P_b)\,|\,\,c\in\mathbb R\}$ {\it only}. As
a result, to proof the solvability of the linear system (\ref{Eq
Values equiv}) (with respect to $f$) in the case of $c_a\not=c_b$
we must show that $P_a-P_b\not\in\nu\cdot\mathscr D$.
\smallskip

\noindent{\bf Step 2.}\,\,\,Assume the opposite: there is a
$d\in\mathscr D$ such that $P_a-P_b=d\cdot\nu$, and show that this
assumption leads to a contradiction.

Compare the fields $\nabla[G(a,\cdot)-G(b,\cdot)]$ and $d$. Since
$G(a,\cdot)=G(b,\cdot)=0$ on $\partial\Omega$ both of them are
{\it normal} on the boundary. Hence, by the assumption, they are
{\it equal} on $\partial\Omega$. In the mean time, the field
$\nabla[G(a,\cdot)-G(b,\cdot)]$ is harmonic in
$\dot\Omega\setminus[\{a\}\cup\{b\}]$, whereas $d$ is harmonic in
the whole $\dot\Omega$. The coincidence at the boundary implies
the coincidence in the domain of harmonicity. Hence,
$\nabla[G(a,\cdot)-G(b,\cdot)]$ can be extended by continuity to
the whole $\Omega$ and $\nabla[G(a,\cdot)-G(b,\cdot)]=d$
everywhere. However, the latter is impossible since
 $$
{\rm div\,}
\nabla[G(a,\cdot)-G(b,\cdot)]=\Delta[G(a,\cdot)-G(b,\cdot)]=\delta_a-\delta_b\,,
 $$
whereas ${\rm div\,}d=0$ everywhere in $\dot\Omega$. This
contradiction shows that $P_a-P_b\not\in\nu\cdot\mathscr D$.
\smallskip

\noindent{\bf Step 3.}\,\,\,The case of $a$ and/or $b$ belonging
to the boundary is reduced to the previous one by the collar
theorem arguments, which were applied at the end of the proof of
Lemma \ref{L H controllability}.
\end{proof}

 \begin{Corollary}\label{Cor strong separation}
The algebra $\vee\mathscr Q(\Omega)\subset C(\Omega;H)$ strongly
separates points of $\Omega$.
 \end{Corollary}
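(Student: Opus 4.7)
The proof proposal for this Corollary is essentially a one-liner, so I will also comment on why the statement is worth recording. My plan is as follows.

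The definition of strong separation of points is manifestly monotone in the family: if $\mathscr F_1\subset\mathscr F_2\subset C(\Omega;H)$ and $\mathscr F_1$ already strongly separates points, then so does $\mathscr F_2$, because any $p\in\mathscr F_1$ witnessing the required values $p(a)=h_a$, $p(b)=h_b$ automatically lies in $\mathscr F_2$ as well. Therefore my plan is simply to invoke this monotonicity. I will first note the tautological inclusion $\mathscr Q(\Omega)\subset\vee\mathscr Q(\Omega)$, which holds since $\vee\mathscr Q(\Omega)$ is by definition the minimal subalgebra of $C(\Omega;H)$ containing $\mathscr Q(\Omega)$.

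Given that inclusion, the Corollary follows directly from Lemma \ref{L separation}: for any pair $a,b\in\Omega$ and any prescribed values $h_a\in H_a$, $h_b\in H_b$, the harmonic field $p\in\mathscr Q(\Omega)$ produced by Lemma \ref{L separation} already satisfies $p(a)=h_a$, $p(b)=h_b$, and it lies in $\vee\mathscr Q(\Omega)$. There is no obstacle and no calculation.

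The reason the Corollary is stated separately is that the subsequent Stone-Weierstrass type argument for Theorem \ref{T1} is phrased for algebras rather than for mere linear subspaces; strong separation is the hypothesis one uses on the algebra $\vee\mathscr Q(\Omega)$ (together with the fact, already established in (\ref{Eq mscr C supset C(Omega;R)}), that $\overline{\vee\mathscr Q(\Omega)}$ contains the scalar subalgebra $C(\Omega;\mathbb R)$). So the only thing to write down is the one-line deduction; the real work has already been done in the proof of Lemma \ref{L separation}.
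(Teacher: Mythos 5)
Your proposal is correct and coincides with the paper's own (implicit) reasoning: the Corollary is stated without a separate proof precisely because it follows at once from Lemma \ref{L separation} together with the inclusion $\mathscr Q(\Omega)\subset\vee\mathscr Q(\Omega)$ and the evident monotonicity of strong separation. Nothing further is needed.
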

This property plays important role in proving density theorems
\cite{Jarosz}.

\subsubsection*{Completing the proof of Theorem \ref{T1}}
Recall that $\mathscr C=\overline{\vee\mathscr Q(\Omega)}$ and
prove that $\mathscr C= C(\Omega;H)$. The fact, which will play
the key role, is the embedding $\mathscr C\supset C(\Omega;\mathbb
R)\cong C^\mathbb R(\Omega)$: see (\ref{Eq mscr C supset
C(Omega;R)}).
\medskip

\noindent$\bullet$\,\,\,Fix an $x\in\Omega$ and choose the smooth
boundary controls $f^x_1, f^x_2, f^x_3$ such that $\nabla
w^{f^x_1}(x), \nabla w^{f^x_2}(x), \nabla w^{f^x_3}(x)$ constitute
a basis of $T\Omega_x$. It is possible owing to Lemma \ref{L H
controllability}. By continuity, there is a ball
$B_{r(x)}[x]\subset \Omega$ centered at $x$, of (small enough)
radius $r(x)$, such that $\nabla w^{f^x_1}(y), \nabla
w^{f^x_2}(y), \nabla w^{f^x_3}(y)$ is a basis of $T\Omega_y$ for
each $y\in B_{r(x)}[x]$.

Let such a choice be done for each $x\in\Omega$.
\smallskip

\noindent$\bullet$\,\,\,The balls provide an open cover
$\Omega=\cup_{x\in\Omega}B_{r(x)}[x]$. By compactness there is a
finite subcover $\Omega=\cup_{n=1}^N B_{r_n}[x_n]$, where
$r_n:=r(x_n)$. Let $\eta_1,\dots,\eta_N$ be a partition of unit
subordinated to the subcover, so that
 $$
\eta_1,\dots,\eta_N \in C^\infty(\Omega),\quad{\rm
supp\,}\eta_n\subset
B_{r_n}[x_n],\quad\sum\limits_{n=1}^N\eta_n\equiv
1\,\,\,\text{in}\,\,\,\Omega
 $$
holds.
\smallskip

\noindent$\bullet$\,\,\,Take $p=\{\alpha,u\}\in C(\Omega;H)$ and
represent
 \begin{align*}
p=\sum\limits_{n=1}^N\eta_n p=\{\sum\limits_{n=1}^N\eta_n
\alpha,\sum\limits_{n=1}^N\eta_n u\}=\sum\limits_{n=1}^N\{\eta_n
\alpha,0\}+ \sum\limits_{n=1}^N\{0,\eta_n u\}
 \end{align*}
with $\{\eta_n \alpha,0\}\in C(\Omega;\mathbb R)\subset\mathscr
C$. In the mean time, one has
 $$
\eta_n u=\sum\limits_{k=1}^3 \varkappa^n_k\,\nabla w^{f^{x_n}_k}
 $$
with the certain $\varkappa^n_k\in C^\mathbb R(\Omega)$ supported
in $B_{r_n}[x_n]$. Note that $\{\varkappa^n_k,0\}\in
C(\Omega;\mathbb R)\subset\mathscr C$.

Resuming, we arrive at the representation
 \begin{align*}
p=\sum\limits_{n=1}^N\{\eta_n
\alpha,0\}+\sum\limits_{n=1}^N\sum\limits_{k=1}^3\{\varkappa^n_k,0\}\{0,\nabla
w^{f^{x_n}_k}\}\,,
 \end{align*}
where all cofactors and summands do belong to $\mathscr C$. Thus
$p\in\mathscr C$ and, hence, $C(\Omega;H)=\mathscr C$.

{\it Theorem 1 is proved}.

 \begin{Remark}
Analyzing the proof, it is easy to recognize that the family
$\mathscr W:=\left\{\{0,\nabla
w^f\}\,|\,\,\,f\,\,\text{is\,\,smooth}\,\right\}$, which is
smaller than $\mathscr Q(\Omega)$, also generates the whole of the
continuous field algebra: $\overline{\vee\mathscr W}=C(\Omega;H)$.
 \end{Remark}

\section{Controllability of 2-jets}\label{sec 2 jets controllablity}
Fix an $a\in \dot \Omega$; let $x^1, x^2, x^3$ be the local
coordinates in a neighborhood $\omega \ni a$. With a smooth
function $\phi$ one associates the row of its 0,1,2-order
derivatives
 \begin{align*}
&
j_a[\phi]\,:=\{\phi(a);\,\,\phi_{x^1}(a),\phi_{x^2}(a),\phi_{x^3}(a);\,\\
& \phi_{x^1 x^1}(a), \phi_{x^1 x^2}(a),\phi_{x^1 x^3}(a),\phi_{x^2
x^2}(a),\phi_{x^2 x^3}(a),\phi_{x^3 x^3}(a)\}\,\in\,\mathbb
R^{10},\quad
 \end{align*}
which provides a coordinate representation of its {\it second jet}
at the point $a$ \cite{Naram}. For short, we say $j_a[\phi]$ to be
a 2-jet of $\phi$ at $a$ and consider $\mathbb R^{10}$ with the
(standard) inner product $\langle j,j'\rangle$ as a space of
2-jets.

Recall that in coordinates the Laplacian acts by
 $$
\Delta
\phi\,=\,g^{-\frac{1}{2}}[g^{\frac{1}{2}}g^{ik}\phi_{x^k}]_{x^i}\,,
 $$
where $\{g^{ik}\}$ is the inverse to the metric tensor matrix
$\{g_{ik}\}$ and $g={\rm det}\{g_{ik}\}$ (sum\-mation over
repeating indexes is in the use). We say the row
 \begin{align*}
&
\lambda_a\,:=\\
&
=\{0;g^{-\frac{1}{2}}[g^{\frac{1}{2}}g^{i1}]_{x^i},g^{-\frac{1}{2}}[g^{\frac{1}{2}}g^{i2}]_{x^i},g^{-\frac{1}{2}}
[g^{\frac{1}{2}}g^{i3}]_{x^i}; g^{11},
2g^{12},2g^{13},g^{22},2g^{23},g^{33}\}\big|_{x=a}
 \end{align*}
to be the {\it Laplace jet} and represent $(\Delta
\phi)(a)=\langle \lambda_a,j_a[\phi]\rangle$.

The harmonicity $\Delta w=0$ is equivalent to the orthogonality
$\langle j_a[w], \lambda_a\rangle=0,\,\,\,a\in\omega$. Therefore
one has $j_a[w]\in \mathbb R^{10}\ominus {\rm span\,}\lambda_a$.
Let us show that the 2-jets of harmonic functions exhaust the
subspace $\mathbb R^{10}\ominus {\rm span\,}\lambda_a$. This
result may be interpreted as a point-wise boundary controllability
of 2-jets by harmonic functions. Recall that $w^f$ is a solution
to (\ref{Eq D3}), (\ref{Eq D4}).

 \begin{Lemma}\label{L jets}
For any $a\in\Omega$ and $s\in \mathbb R^{10}\ominus\,{\rm
span\,}\lambda_a$ there is a smooth $f$ such that $j_a[w^f]=s$.
 \end{Lemma}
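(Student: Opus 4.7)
The plan is to adapt the proof of Lemma~\ref{L H controllability} to the second jet. Consider the linear map $\mathfrak{J}_a : C^\infty(\partial\Omega) \to \mathbb{R}^{10}$, $f \mapsto j_a[w^f]$. Since each $w^f$ is harmonic at $a$, the range of $\mathfrak{J}_a$ is contained in $\mathbb{R}^{10}\ominus{\rm span\,}\lambda_a$; it suffices to show equality. Suppose not, so there is a nonzero $s\in\mathbb{R}^{10}\ominus{\rm span\,}\lambda_a$ with $\langle s, j_a[w^f]\rangle = 0$ for every smooth $f$.

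Differentiating the Poisson formula (\ref{Eq uf via G}) in $x$ under the integral sign and evaluating at $x=a$ yields
\[
\langle s, j_a[w^f]\rangle \,=\, \int_{\partial\Omega}\partial_{\nu_y}\Phi_s(y)\,f(y)\,d\sigma(y),
\]
where
\[
\Phi_s(y) \,:=\, s_0\,G(a,y) \,+\, \sum_{i} s_i\, \partial_{x^i}G(x,y)\big|_{x=a} \,+\, \sum_{i\le j} s_{ij}\,\partial_{x^i}\partial_{x^j}G(x,y)\big|_{x=a}.
\]
Arbitrariness of $f$ gives $\partial_{\nu_y}\Phi_s|_{\partial\Omega}=0$. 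Since $G(x,\cdot)|_{\partial\Omega}=0$ for every $x\in\dot\Omega$, all $x$-derivatives of $G$ also vanish on $\partial\Omega$, whence $\Phi_s|_{\partial\Omega}=0$. Off the diagonal $G\in C^2_{\rm loc}$ by property~1, so differentiating $\Delta_y G(x,y)=0$ in $x$ and setting $x=a$ is legitimate for $y\neq a$; hence $\Phi_s$ is harmonic in $\Omega\setminus\{a\}$. Cauchy-data uniqueness for Laplace's equation (\cite{Leis}, exactly as invoked in Lemma~\ref{L H controllability}) then forces $\Phi_s\equiv 0$ on $\Omega\setminus\{a\}$.

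The contradiction will come from the singular structure of $\Phi_s$ at $a$. In Riemannian normal coordinates centered at $a$ one has $G(x,y)=c/|x-y|+R(x,y)$ with $R$ strictly less singular, and $\lambda_a$ reduces to $\{0;\,0,0,0;\,1,0,0,1,0,1\}$; hence $s\perp\lambda_a$ is the tracelessness $s_{11}+s_{22}+s_{33}=0$. Expanding the three summands of $\Phi_s$ near $y=a$ gives a monopole $\sim s_0/|y|$, a dipole $\sim \sum_i s_i y^i/|y|^3$, and a quadrupole $\sim \sum_{i\le j} s_{ij}\bigl[-\delta_{ij}/|y|^3+3y^iy^j/|y|^5\bigr]$, of orders $|y|^{-1},|y|^{-2},|y|^{-3}$ respectively; these decouple by order of divergence. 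The only linear relation among the six quadrupole generators is $\sum_i\partial_{x^ix^i}(1/|y|)=0$, i.e.\ tracelessness of the coefficient matrix---exactly the constraint $s\perp\lambda_a$. Therefore, if $s\neq 0$, at least one of the three multipole contributions is nontrivial, so $\Phi_s$ cannot vanish on a punctured neighborhood of $a$, a contradiction. The main obstacle is this matching: identifying the Riemannian constraint $\langle s,\lambda_a\rangle=0$ with the one automatic relation among the Euclidean second-derivative multipoles coming from $\Delta_x(1/|x-y|)=0$, which normal coordinates arrange to coincide at $a$; the $C^\infty$ remainder $R$ cannot affect this leading-order analysis.
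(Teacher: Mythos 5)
Your reduction and the construction of $\Phi_s$ are sound and in fact reproduce the first half of the paper's own argument: $\Phi_s$ is precisely the solution $H$ of the auxiliary problem (\ref{Eq H1})--(\ref{Eq H2}) that the paper introduces (apply (\ref{Eq v^h via G}) to the right-hand side $S^*\delta_a$ and use the symmetry of $G$), and both proofs conclude from $\Phi_s|_{\partial\Omega}=\partial_\nu\Phi_s|_{\partial\Omega}=0$ and harmonicity off $a$ that $\Phi_s$ vanishes in $\Omega\setminus\{a\}$. The two proofs part ways at the final contradiction, and that is where yours has a genuine gap. You extract the contradiction from the claim that, in normal coordinates, $G(x,y)=c/|x-y|+R(x,y)$ with a remainder whose second $x$-derivatives do not pollute the monopole/dipole/quadrupole hierarchy. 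Nothing in the paper's inventory of properties of $G$ (items 1--4) supports this: only the bounds $G\leqslant c/r_{xy}$ and $|\nabla_yG|\leqslant c/r_{xy}^2$ are available, with no asymptotics for $\partial_x^2G$ and no identification of the leading coefficient. The remainder $R$ is certainly not ``$C^\infty$'' for a non-flat metric; at the stated regularity $g\in C^2$ one expects at best $R=O(r^{-1+\alpha})$, whose second derivatives are $O(r^{-3+\alpha})$ --- harmless for the quadrupole but not obviously for the dipole, and in any case this must be proved by a Hadamard-parametrix type construction. Moreover, with $g\in C^2$ geodesic normal coordinates have limited regularity, so the statements ``first derivatives of $g$ vanish at $a$'' and ``$\lambda_a$ reduces to the Euclidean trace form'' need justification; and since $\mathbb R^{10}\ominus{\rm span\,}\lambda_a$ is a coordinate-dependent description, you should also note why passing to normal coordinates is legitimate (the target subspace $\{j\,|\,\langle\lambda_a,j\rangle=0\}$ is intrinsically the space of 2-jets annihilated by $\Delta$ at $a$, so its dimension and the containment of the attainable set are coordinate-free).

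The paper's endgame avoids all of this pointwise analysis: once $H=\Phi_s$ is known to vanish off $a$, it is a distribution supported at $\{a\}$, hence a finite combination of $\delta_a$ and its derivatives; comparing orders of singularity on the two sides of $\Delta H=S^*\delta_a$ forces $H=c\delta_a$, and pairing with test functions $\eta$ yields $\langle s,j_a[\eta]\rangle=\langle c\lambda_a,j_a[\eta]\rangle$ for all $\eta$, i.e.\ $s=c\lambda_a$, contradicting $s\perp\lambda_a$. This uses only soft distribution theory plus the same uniqueness theorem, and no asymptotics of $G$ near the diagonal. Your Euclidean multipole picture is correct as far as it goes (in particular the identification of the single quadrupole relation with tracelessness, i.e.\ with $s\perp\lambda_a$, is exactly right), but as written the argument rests on unestablished estimates for $\partial_x^2G$; either supply a parametrix construction or replace the singularity analysis by the distributional support argument.
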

\begin{proof}
Taking into account the structure of the Laplace jet, we may deal
with $s=\{0; s_1,s_2,s_3; s_{11}, \dots, s_{33}\}$, and let it be
such that $0\not=s\in \mathbb R^{10}\ominus\,{\rm
span\,}\lambda_a$ but $\langle s, j_a[w^f]\rangle=0$ for any
smooth $f$. Show that such an assumption leads to contradiction.
\smallskip

\noindent$\bullet$\,\,\,For a differential operator $L$ with
smooth coefficients in $\Omega$, by $L^*$ we denote its adjoint by
Lagrange that is defined by
 $$
(L\eta,\zeta)_{L_2(\Omega)}\,=\,(\eta,L^*\zeta)_{L_2(\Omega)},
\qquad \eta,\zeta \in \mathscr D(\dot\Omega)\,.
 $$
For a distribution $h\in\mathscr D^{\,\prime}(\dot\Omega)$ one
defines $Lh$ by
$(Lh,\eta):=(h,L^*\eta)_{L_2(\Omega)},\,\,\eta\in\mathscr
D(\dot\Omega)$.

Let $S$ be a differential operator, which acts by
 \begin{align*}
& (Sv)(x) \,=\\
& =\, \left[s_1v_{x^1}+s_2v_{x^2}+s_3v_{x^3}+s_{11}v_{x^1
x^1}+s_{12}v_{x^1 x^2}+\dots+s_{33}v_{x^3 x^3}\right](x)=\\
& =\langle s,j_x[v]\rangle, \qquad x\in\omega
 \end{align*}
in a coordinate neighborhood $\omega$ of $a\in\dot\Omega$, where
the (constant) coefficients are the components of the above chosen
jet $s$.
\smallskip

\noindent$\bullet$\,\,\,Let $\delta_a\in\mathscr
D^{\,\prime}(\dot\Omega)$ be the Dirac measure supported at the
point $a\in\dot\Omega$. Consider the problem
 \begin{align}
\label{Eq H1} & \Delta H\,=\,S^*\delta_a\\
\label{Eq H2} & H\big|_{\partial\Omega}\,=\,0\,.
 \end{align}
The equation is understood as a relation in $\mathscr
D^{\,\prime}(\dot\Omega)$; its r.h.s. is a distribution acting by
$(S^*\delta_a,\eta)_{L_2(\Omega)} = (S\eta)(a)$. The boundary
condition does make sense since $H$ is harmonic outside ${\rm
supp\,}S^*\delta_a=\{a\}$. Also, the normal derivative
$\partial_\nu H$ is a smooth function on $\partial\Omega$.

Formally by Green, for a function $v\in C^2(\Omega)$ one has
 \begin{align*}
& \langle s,j_a[v]\rangle=(Sv)(a)=\int_\Omega \delta_a
\,Sv\,d\mu=\int_\Omega S^*\delta_a \,v\,d\mu & \overset{(\ref{Eq
H1})}= \int_\Omega \Delta H \,v\,d\mu=\\
& \overset{(\ref{Eq H2})}= \int_\Omega H \,\Delta v\,d\mu +
\int_{\partial\Omega}
\partial_\nu H\,v\,d\sigma\,.
 \end{align*}
To justify the final equality
 \begin{align}\label{Eq Green for jet}
\langle s,j_a[v]\rangle\,=\,\int_\Omega H \,\Delta v\,d\mu +
\int_{\partial\Omega}
\partial_\nu H\,v\,d\sigma
 \end{align}
one can use the standard regularization technique, approximating
$\delta_a$ by $\delta_a^\varepsilon\in\mathscr D(\dot\Omega)$
supported near $a$.
\smallskip

\noindent$\bullet$\,\,\,By the choice of $s$, for $v=w^f$ the
equality (\ref{Eq Green for jet}) provides
 $$
\int_{\partial\Omega}\partial_\nu
H\,w^f\,d\sigma=\int_{\partial\Omega}\partial_\nu
H\,f\,d\sigma=0\,.
 $$
By arbitrariness of $f$ we get $\partial_\nu H=0$ on
$\partial\Omega$. So, $H$ is harmonic in $\Omega\setminus\{a\}$
and has the zero Cauchy data on the boundary. By the uniqueness
theorem, $H$ vanishes everywhere outside $a$. Hence, the
distribution $H$ is supported at $a$. The well-known fact of the
distribution theory is that such an $H$ is a linear combination of
$\delta_a$ and its derivatives. In the mean time, comparing the
orders of singularities in the left and right hand sides of
(\ref{Eq H1}), one easily concludes that $$H=c\delta_a$$ with
$c={\rm const}\not=0$. Indeed, otherwise $\Delta H$ contains the
derivatives of $\delta_a$ of the order $\geqslant 3$ that makes
the equality (\ref{Eq H1}) impossible.

For an $\eta\in \mathscr D(\dot\Omega)$ one has
 \begin{align*}
 \langle s,j_a[\eta]\rangle= (\delta_a, S\eta) = (S^*\delta_a,
 \eta)\overset{(\ref{Eq H1})}=(\Delta c\delta_a,\eta)=(c\delta_a,\Delta
 \eta)= \langle c\lambda_a,j_a[\eta]\rangle\,.
 \end{align*}
Comparing the beginning with the end and referring to the evident
$\{j_a[\eta]\,|\,\,\eta\in \mathscr D(\dot\Omega)\}=\mathbb
R^{10}$, we arrive at $s=c\lambda_a$ that contradicts to the
starting assumption $s\bot\lambda_a$.
\smallskip

\noindent$\bullet$\,\,\,The case $a\in\partial\Omega$ is reduced
to the previous one by means of the trick already used at the end
of the proof of Lemma \ref{L H controllability}: embedding
$\Omega\Subset\Omega^\prime$.
\end{proof}
\medskip

As is easy to recognize, Lemma \ref{L jets} implies the assertion
of Lemma \ref{L H controllability} for the case of the single
point $a$. However, Lemma \ref{L jets} may be generalized on the
finite set $a_1,\dots, a_N$ so that the relevant boundary
controllability of 2-jets of harmonic functions holds up to the
natural defect in $\oplus\sum_i\mathbb R^{10}_{a_i}$.

\subsubsection*{Determination of metric from harmonic functions}
The metric on $\Omega$ determines the family of harmonic
functions. The converse is also true in the following sense.
\smallskip

\noindent$\bullet$\,\,\,Let $c>0$ be a smooth function on $\Omega$
and $cg$ a conformal deformation of the metric $g$. By
$\Delta_{cg}$ and $\Delta_{g}$ we denote the corresponding
Laplacians. A simple calculation leads to the relation
 \begin{equation}\label{Eq conformal deformation}
\Delta_{cg}y\, =\,c^{-1}\Delta_g y -2^{-1}\,\nabla
c^{-1}\cdot\nabla y\,,
 \end{equation}
which is specific for the 3d case. Taking $y=w^f$, we see that the
metrics $cg$ and $g$ have the same reserve of harmonic functions
$w^f$ if and only if $\nabla c^{-1}\cdot\nabla w^f=0$ holds for
any smooth $f$. In the mean time, by Lemma \ref{L H
controllability} the gradients $\nabla w^f=0$ constitute the local
bases in $\Omega$. Hence, the latter equality implies $\nabla
c^{-1}=0$, i.e., $c=\rm const$.
\smallskip

\noindent$\bullet$\,\,\,Fix a point $a$ in a coordinate
neighborhood $\omega\ni a$. By $\lambda_a^g$ we denote the Laplace
jet of the given metric $g$. By Lemma \ref{L jets}, the space of
jets is
 \begin{equation}\label{Eq decomposition of jets}
\mathbb
R^{10}_a=\{j_a[\phi]\,|\,\,\phi\,\,\text{is\,\,smooth}\}=\{j_a[w^f]\,|\,\,f\,\,\text{is\,\,smooth}\}\oplus
{\rm span\,}\lambda^g_a\,.
 \end{equation}
Therefore, writing  $(\Delta w^f)(a)=0$ in the form
 $$
\langle \lambda^g_a,j_a[w^f]\rangle=0,\qquad
f\,\,\text{is\,\,smooth}
 $$
and varying $f=f_1, f_2, \dots$, we get a linear homogeneous
algebraic system with respect to the components of the jet
$\lambda^g_a$, which determines them up to a factor, which may
depend on $a$. Along with the components, we determine the tensor
$g$ up to a factor, possibly depending on $a$. However, by the
above mentioned geometric reasons, this factor is a constant.
\smallskip

Thus, the family $\{w^f\,|\,\,f \,\,\text{is\,\,smooth}\}$
determines the metric $g$ up to a constant positive factor. If $g$
is known at least at a single point $x_0\in\Omega$, then it is
uniquely determined everywhere.
\smallskip

Notice in addition that in two-dimensional case relation (\ref{Eq
conformal deformation}) is of the form $\Delta_{cg}y\,
=\,c^{-1}\Delta_g y$, so that the metrics $cg$ and $g$ determine
the same reserve of harmonic functions. It is the reason, because
of which in 2d impedance tomography problem the metric is
recovered up to conformal equivalence \cite{B Calderon 2003}.
\medskip

\noindent$\bullet$\,\,\,Here we describe a trick, which is used in
dynamical/spectral inverse problems and 2d impedance tomography
problem, for recovering the metric via boundary data\cite{B UMN
2017}. The hope is that it may be useful in future investigation
of 3d ITP.
\smallskip

Assume that a topological space $\tilde\Omega$ is {\it
homeomorphic} to $\Omega$ via a homeomorphism
$\beta:\Omega\to\tilde\Omega$. Also assume that the family of
functions
 $$
\{\tilde w^f=w^f\circ\beta^{-1}\,|\,\,f \,\,\text{is\,\,smooth}\}
 $$
is given. The following procedure enables one to determine the
metric $\tilde g=\beta_* g$ in $\tilde\Omega$.
\smallskip

\noindent$1.$\,\,\,Fix a point $a\in\tilde\Omega$ and choose its
neighborhood $\tilde\omega$ with the coordinates $x^1, x^2, x^3$.
By the way, Lemma \ref{L H controllability} enables one to use the
images $\tilde w^f$ as local coordinates.
\smallskip

\noindent$2.$\,\,\,Find ${\rm span\,}\lambda_a^{\tilde g}$ by
(\ref{Eq decomposition of jets}) (replacing functions $w^f$ on
$\omega$ with $\tilde w^f$ on $\tilde\omega$). As was shown above,
the family of these subspaces given for $a\in\tilde\omega$
determines the metric up to a constant factor. So, $c\tilde g$ is
recovered. Assuming $\tilde g$ to be known at least at a single
point $a_0\in\tilde\omega$, one recovers $\tilde g$ uniquely.
\smallskip

\noindent$3.$\,\,\, Covering $\tilde\Omega$ by the coordinate
neighborhoods and repeating the previous steps, we determine
$\tilde g$ in $\tilde\Omega$.

\section{Uniqueness properties of harmonic fields}\label{sec Uniqueness properties of harmonic fields}
Roughly speaking, the following result means that the set of zeros
of a harmonic quaternion field may be at most of dimension 1.
 \begin{Lemma}\label{L uniqueness}
Let $\Sigma\in\Omega$ be a $C^2$-smooth surface (2-dim
submanifold). If $p\in\mathscr Q(\Omega)$ obeys $p|_{\Sigma}=0$
then $p=0$ in the whole $\Omega$.
 \end{Lemma}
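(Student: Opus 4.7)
The plan is to apply Cauchy-type unique continuation for elliptic equations (of the kind already invoked in Lemmas \ref{L H controllability} and \ref{L jets}) to each component of $p=\{\alpha,u\}$. Since $p\in\mathscr Q(\Omega)$, Remark \ref{R1} gives $\Delta\alpha=0$ and $\vec\Delta u=0$ in $\dot\Omega$ with $\alpha,u\in C^2_{\rm loc}$. The hypothesis provides $\alpha|_\Sigma=0$ and $u|_\Sigma=0$; the task is to upgrade these to full (zero) Cauchy data on $\Sigma$, by extracting the normal-derivative information from the first-order system $\nabla\alpha={\rm rot\,}u$, ${\rm div\,}u=0$.

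First I would handle $\alpha$. At an arbitrary $x\in\Sigma$, choose a local orthonormal frame $(e_1,e_2,\nu)$ with $e_1,e_2$ tangent to $\Sigma$ and $\nu$ the unit normal. Because $u\equiv 0$ on $\Sigma$, every derivative of $u$ in the directions $e_1,e_2$ vanishes at $x$; moreover the Christoffel-symbol corrections to the orthonormal-frame formulas for ${\rm rot\,}$ and ${\rm div\,}$ are linear in the components of $u$ and so also vanish on $\Sigma$. Consequently $({\rm rot\,}u)\cdot\nu$ reduces at $x$ to $\partial_{e_1}(u\!\cdot\! e_2)-\partial_{e_2}(u\!\cdot\! e_1)=0$. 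The identity $\nabla\alpha={\rm rot\,}u$ then yields $\partial_\nu\alpha=0$ on $\Sigma$, so $\alpha$ is a harmonic function with zero Cauchy data on the non-characteristic surface $\Sigma$. The elliptic uniqueness theorem (\cite{Leis}, sec.~4.3, as in the proof of Lemma \ref{L H controllability}) forces $\alpha\equiv 0$ in $\Omega$; a $\Sigma$ meeting $\partial\Omega$ is handled by the collar-extension trick already used earlier.

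Once $\alpha\equiv 0$, both ${\rm rot\,}u=0$ and ${\rm div\,}u=0$ hold throughout $\dot\Omega$. In the same frame at $x\in\Sigma$, tangential derivatives of all components of $u$ vanish. Dropping the $u$-linear Christoffel terms, the two tangential components of ${\rm rot\,}u=0$ give $\partial_\nu(u\!\cdot\! e_j)=\partial_{e_j}(u\!\cdot\!\nu)=0$ for $j=1,2$, while ${\rm div\,}u=0$ gives $\partial_\nu(u\!\cdot\!\nu)=-\partial_{e_1}(u\!\cdot\! e_1)-\partial_{e_2}(u\!\cdot\! e_2)=0$. Hence each harmonic scalar component $u\!\cdot\! e_i$, $u\!\cdot\!\nu$ has zero Cauchy data on $\Sigma$, and a second application of the elliptic uniqueness theorem forces $u\equiv 0$, so that $p=0$ in $\Omega$.

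The main obstacle is the intrinsic-geometry bookkeeping behind the two previous paragraphs: one must verify that, once $u$ vanishes on $\Sigma$, the covariant-derivative expressions for ${\rm rot\,}u$ and ${\rm div\,}u$ on $\Sigma$ coincide with the naive orthonormal-frame formulas used above, so that the tangential and normal combinations of $\nabla u$ listed really are what the equations require to vanish. This is a routine but careful local calculation (most transparently done in a frame that is geodesic at $x$); once it is in place, the rest of the argument is pure Cauchy uniqueness for the scalar Laplacian.
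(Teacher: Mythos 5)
Your treatment of $\alpha$ is essentially the paper's: both arguments observe that $u|_\Sigma=0$ kills the tangential derivatives of $u$ (and the $u$-linear connection terms), so the normal component of ${\rm rot\,}u$ vanishes on $\Sigma$, giving $\partial_\nu\alpha=0$; together with $\alpha|_\Sigma=0$ this is zero Cauchy data for the harmonic function $\alpha$, and the scalar elliptic uniqueness theorem yields $\alpha\equiv0$. The paper packages the normal-component computation in the identity $\nu\cdot{\rm rot\,}v={\rm div}_\Sigma\,\nu\wedge v_\theta$, but the content is the same.

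The gap is in the second half. After $\alpha\equiv0$ you correctly deduce that the full first-order jet of $u$ vanishes on $\Sigma$, but you then assert that the frame components $u\cdot e_1$, $u\cdot e_2$, $u\cdot\nu$ are ``harmonic scalar components'' and apply the scalar Cauchy uniqueness theorem to each. On a general Riemannian 3-manifold this is false: ${\rm rot\,}u={\rm div\,}u=0$ gives $\vec\Delta u=0$, which in any frame is a \emph{coupled} second-order elliptic system (the Bochner--Weitzenb\"ock formula introduces curvature terms, and a frame adapted to $\Sigma$ is not parallel), not three decoupled Laplace equations. To close the argument as written you would need a Cauchy uniqueness theorem for elliptic systems with scalar principal part --- a legitimate but strictly stronger tool than the scalar theorem of \cite{Leis} that the paper invokes. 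The paper sidesteps this: since ${\rm rot\,}u=0$, locally near $\Sigma$ one writes $u=\nabla\varphi$ with $\varphi$ harmonic (because ${\rm div\,}u=0$) and $\nabla\varphi|_\Sigma=0$; the scalar uniqueness theorem forces $\varphi$ to be constant, hence $u=0$ near $\Sigma$, and unique continuation from an open set then gives $u\equiv0$ globally. Note also that your adapted frame exists only near $\Sigma$, so even a corrected componentwise argument would yield only local vanishing and would still require this final globalization step, which you omit. The strategy is repairable, but the key second application of ``the elliptic uniqueness theorem'' does not apply to the objects you feed it.
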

\begin{proof}
Since the claimed result is of local character, we assume $\Sigma$
to be a both-side surface endowed with a smooth field of the unit
normals $\nu$. Also, $\Sigma$ possesses the (induced) Riemannian
metric and is provided with the corresponding operations on vector
fields. In particular, a divergence, which is denoted by ${\rm
div}_\Sigma$, is well defined.
\smallskip

\noindent$\bullet$\,\,\, For a point $x\in\Sigma$ and vector $v\in
T\Omega_x$ we represent
 $$
v=v_\theta+v_\nu:\qquad v_\nu= v\cdot\nu\,\nu,\quad
v_\theta=v-v_\nu\,
 $$
and, by default, identify $v_\theta$ with the proper vector of
$T\Sigma_x$. By the latter, for a smooth {vector field} $v$ given
in a neighborhood of $\Sigma$, the value $[{\rm div}_\Sigma\,
v_\theta](x)$ is of clear meaning. Also, recall the well-known
vectot analysis relation
 \begin{equation}\label{Eq Lemma uniq 1}
\nu\cdot{\rm rot\,}v\,=\,{\rm div}_\Sigma\, \nu\wedge v_\theta
\qquad {\rm on}\,\,\,\,\Sigma
\end{equation}
(see, e.g. \cite{Sch}).
\smallskip

\noindent$\bullet$\,\,\,Begin with the case $\Sigma\subset
\dot\Omega$. Let $p=\{\alpha,u\}\in \mathscr Q(\Omega)$, so that
 \begin{equation}\label{Eq Lemma uniq 2}
\nabla\alpha\,=\,{\rm rot\,}u,\quad{\rm div\,}u\,=\,0 \qquad {\rm
in}\,\,\,\dot\Omega
\end{equation}
\smallskip
holds. Let $p|_\Sigma=0$. Since $\alpha|_\Sigma =0$, we have
$(\nabla\alpha)_\theta|_\Sigma=0$ that implies $({\rm
rot\,}u)_\theta\big|_\Sigma=0$ by (\ref{Eq Lemma uniq 2}). In the
mean time, $u|_\Sigma=0$ is equivalent to $u_\theta = u_\nu=0$ on
$\Sigma$; hence $({\rm rot\,}u)_\nu|_\Sigma={\rm
div}_\Sigma\,\nu\wedge u_\theta=0$ by virtue of (\ref{Eq Lemma
uniq 1}). Thus we get $({\rm rot\,}u)_\theta|_\Sigma=({\rm
rot\,}u)_\nu|_\Sigma=0$, i.e. ${\rm rot\,}u|_\Sigma=0$.

The latter equality and (\ref{Eq Lemma uniq 2}) lead to
$(\nabla\alpha)|_\Sigma=0$ (along with $\alpha|_\Sigma=0$). So,
$\alpha$ is a harmonic function with the zero Cauchy data on
$\Sigma$. Therefore $\alpha =0$ in $\Omega$ by the elliptic
uniqueness theorems \cite{Leis}.

As a result, ${\rm rot\,}u=\nabla\alpha=0$ everywhere in $\Omega$.
Since ${\rm div\,} u=0$, the vector field $u$ is {\rm harmonic} in
$\Omega$ and vanishes on $\Sigma$. Therefore, locally near the
points $x\in\Sigma$ one represents $u=\nabla\varphi$ with a {\it
harmonic} function $\varphi$ provided $\nabla\varphi|_\Sigma=0$.
Such a function is a constant; hence $u=0$ near $\Sigma$. By its
harmonicity, $u$ vanishes globally in $\Omega$.

So, we have $p=0$ in $\Omega$.
\smallskip

\noindent$\bullet$\,\,\,The case $\Sigma\subset\partial\Omega$ is
reduced to the previous one by means of the trick already used at
the end of the proof of Lemma \ref{L H controllability}: embedding
$\Omega\Subset\Omega^\prime$.
\end{proof}
\medskip

\subsubsection*{The authors}
{\bf Mikhail I. Belishev}. Saint-Petersburg Department of the
Steklov Mathematical Institute, Russian Academy of Sciences;
Saint-Petersburg State University; belishev@pdmi.ras.ru.
\smallskip

\noindent{\bf Aleksei F. Vakulenko}. Saint-Petersburg Department
of the Steklov Mathematical Institute, Russian Academy of
Sciences; vak@pdmi.ras.ru.

\end{document}